\theoremstyle{definition}
\theoremstyle{definition}
\theoremstyle{remark}
\newtheorem{thm}{\textbf{Theorem}}[section]
\newtheorem{cor}[thm]{\textbf{Corollary}}
\newtheorem{de}[thm]{\textbf{Definition}}
\newtheorem{ex}[thm]{\textbf{Example}}
\newtheorem{rem}[thm]{\textbf{Remark}}
\numberwithin{equation}{section}
\title[A   new d. for vi on  rnls and    c. th.  it is s. for $(u, v)$-cm]{A   new definition for variational inequalities on  real normed linear spaces and  the case that  it is singelton for $(u, v)$-cocoercive   mappings}
\author{ Ebrahim  Soori }
 \thanks{ \!\!\!\!\!\!\!\! \!\!2010 Mathematics Subject Classification: 90C33; 47H10.  \\ E-mail address: sori.e@lu.ac.ir, sori.ebrahim@yahoo.com
  \\Tel: +98 9188521850 (E. Soori)}
\begin{document}
\begin{large}

%------------------------------------------------------------------------------------%

\maketitle

%------------------------------------------------------------------------------------%
%This part will be filled in by BIMS

%------------------------------------------------------------------------------------%
%\end{center}
\begin{center}
 \begin{normalsize}
    Department  of Mathematics, Lorestan University, Khoramabad, Lorestan, Iran.
 \end{normalsize}
 \end{center}
\begin{abstract}

\begin{normalsize}
Let $C$ be a nonempty closed   convex subset of a  Banach space $E$. In this paper we introduce  a new definition for variational inequality  $V I (C, B)$ on $E$   that   generalizes the analogue   definition on Hilbert spaces. We    generalize $(u, v)$-cocoercive mappings and $v$-strongly  monotone mappings   from Hilbert spaces to  Banach spaces. Then we      prove  the generalized variational inequality   $V I (C, B)$ is singleton for   $(u, v)$-cocoercive mappings     under appropriate assumptions on Banach spaces that extends and improves [S. Saeidi, Comments on relaxed $(u, v)$-cocoercive mappings. Int. J. Nonlinear Anal. Appl. 1 (2010) No. 1, 54-57].
\end{normalsize}
\end{abstract}
\begin{normalsize}
   \textbf{keywords}:   Fixed point; Nonexpansive mapping; $(u, v)$-cocoercive;  Duality mapping;     sunny nonexpansive retraction.
   \end{normalsize}
%%% ----------------------------------------------------------------------
%%% ----------------------------------------------------------------------
%\tableofcontents

\section{ Introduction}
Let $C$ be a nonempty closed and convex subset of a Banach space $E$ and
$E^{*}$ be the dual space of $E$. Let $\langle.,.\rangle$   denote the pairing between $E$ and $E^{*}$. The
normalized duality mapping $J: E \rightarrow E^{*}$
is defined by
\begin{align*}
    J(x)=\{f \in E^{*}: \langle x, f \rangle= \|x\|^{2}=\|f\|^{2} \}
\end{align*}
for all $x \in E$ (Similarly, the  mapping $J$ has  defined for normed spaces in \cite{Ag}).  Let  $U = \{x \in E : \|x\| = 1\}$.  A Banach space $E$ is said to be smooth if for each $x \in U$,
there exists a unique functional $j_{x} \in E^{*}$ such that $\langle x, j_{x}\rangle = \|x\|$ and $\|j_{x}\| = 1$ \cite{Ag}.

Let $C$ be a nonempty closed and convex subset of a Banach space $E$.  A mapping $ T$ of $ C $ into itself is called nonexpansive if $\|Tx - Ty\| \leq \|x - y\|,$ for all $x, y \in C$ and a mapping $f$ is an $\alpha$-contraction on $E $ if  $ \|f (x) -f (y)\| \leq \alpha \|x - y\|, \;x, y \in E$  such that  \\$0 \leq\alpha < 1$.  A mapping $T:C\rightarrow C$ is called Lipschitzian if  there exists a nonnegative number $k$ such that\\ $ \|Tx-Ty\|\leq k\|x-y\| \quad \text{for all} \;\;x,y\in C$.

 Let $C$ be a nonempty closed convex subset of a real Hilbert space $ H $. Let $B : C \rightarrow H $ be a nonlinear map.   Let $ P_{C}$ be the projection of $ H$ onto $ C$. Then the projection operator  $P_{C}$ assigns to each $ x\in H$,  the unique point $  P_{C} x \in C$ satisfying the property
\begin{equation*}
    \|x-P_{C} x\|=\min_{y\in C}\|x-y\|.\qquad\qquad\qquad\qquad\qquad\qquad
\end{equation*}
 The classical variational inequality problem, denoted by $V I (C, B)$ is to find $u \in C $ such that
 \begin{equation}\label{23}
    \langle Bu, v - u\rangle\geq 0,
 \end{equation}
 for all $v \in C$ (see \cite{sa}). For a given $z \in H$, $u \in C$ satisfies the inequality
 \begin{equation}\label{w}
    \langle u - z, v - u\rangle\geq 0,\quad  ( v \in C),
 \end{equation}
 if and only if $u = P_{C} z$. Therefore
\begin{align*}
u \in V I(C,B) \Longleftrightarrow u = P_{C}(u -\lambda Bu),
\end{align*}
where  $\lambda > 0 $ is a constant(see \cite{sa}).  It is known that the projection operator  $P_{C}$ is nonexpansive. It is also known that  $P_{C}$  satisfies
 \begin{equation}\label{24}
    \langle x - y, P_{C} x - P_{C} y\rangle \geq \| P_{C} x - P_{C} y\|^{2},
 \end{equation}
 for $x,y \in H $.

 Let $C$ be a nonempty closed convex subset of a real Hilbert space $ H $, recall   the following definitions (see \cite{sa}):\\
 \begin{enumerate}
   \item [(i)] $B$ is called $v$-strongly monotone if
 $$\langle Bx - By\;,\; x - y\rangle\geq v\|x - y\|^{2}\qquad \text{for\: all}\quad x, y \in C,\qquad\qquad\qquad\qquad$$
 for a constant $v > 0$.
   \item [(ii)] $B$ is said to be relaxed $(u, v)$-cocoercive, if there exist two constants $u, v > 0$ such that
 \begin{equation*}
    \langle Bx - By, x - y\rangle \geq (-u)\|Bx - By\|^{2}+v\|x - y\|^{2},
 \end{equation*}
   for  all  $x, y \in C$. For $u = 0$, $B$ is $v$-strongly monotone. This class of maps is more general than the class of strongly monotone maps. Clearly, every  $v$-strongly monotone map is a relaxed $(u, v)$-cocoercive map.
 \end{enumerate}

Let $C$ be a nonempty closed convex subset of a Banach space $E$. In this paper we introduce    a definition for variational inequality    on Banach spaces  that   generalize the analogue   definition  on  Hilbert spaces. Then we      prove the  variational inequality is singleton for
   $(u, v)$-cocoercive mappings  under appropriate assumptions.

\section{preliminaries}

Let $E$ be a real Banach space with its dual $E^{*}$.  A Banach
space $E$ is said to be strictly convex if
\begin{align*}
\Vert x\Vert=\Vert y\Vert=1, \;\; x\neq y \Rightarrow \Vert \frac{x+y}{2}\Vert <1.
\end{align*}
%Also, $E$ is said to be uniformly convex if for each $ \epsilon \in(0, 2] $, there exists  $ \delta > 0$  such that
%\begin{align*}
%\Vert x\Vert=\Vert y\Vert=1, \;\; \Vert x - y \Vert >\epsilon \Rightarrow \Vert \frac{x+y}{2}\Vert <1- \delta.
%\end{align*}

Let $C$ be a nonempty subset of a normed space $E$ and let $x \in E$. An element
$y_{0} \in C$ is said to be a best approximation to $x$ if
$\|x - y_{0}\| = d(x,C)$,
where
\begin{equation}\label{pcd}
    d(x,C) = \inf _{y\in C} \|x - y\|.
\end{equation}
  The number $d(x,C)$ is called the distance from $x$
to $C$ or the error in approximating $x$ by $C$.

The (possibly empty) set of all best approximations from $x$ to $C$ is denoted
by
$P_{C}(x) = \{y \in C : \|x - y\| = d(x,C)\}$.
This defines a mapping $P_{C}$ from $E$ into $2^{C}$ and is called the metric projection
onto $C$. The metric projection mapping is also known as the nearest point
projection mapping, proximity mapping, and best approximation operator.

Let $ C $ be a nonempty closed subset of a Banach
space $E$. Then a mapping  $Q : E \rightarrow C$  is said to be sunny if
$Q(Qx + t (x - Qx)) = Qx$,   $\forall x \in E,\; \forall  t\geq0 $.
A mapping   $Q : E \rightarrow C$  is said to be a retraction or a projection if $Qx = x$,  $\forall x \in C$. If $E$ is smooth
then the sunny nonexpansive retraction of $E$ onto $ C $ is uniquely decided (see \cite{ta1}). Then, if $E$ is
a smooth Banach space, the sunny nonexpansive retraction of $E$ onto $ C $ is denoted by  $Q_{C}  $.  Let $ C $
be a nonempty closed subset of a Banach space $E$. Then a subset $ C $ is said to be a nonexpansive
retract (resp. sunny nonexpansive retract) if there exists a nonexpansive retraction (resp. sunny
nonexpansive retraction) of $E$ onto $ C $ (see \cite{br2, br3}).
Let $ C $ be
a nonempty closed convex subset of a smooth, reflexive, and strictly convex Banach space $ E $.  Let  $Q_{C}  $  be  the
sunny nonexpansive retraction.  Then we have
\begin{align}\label{qc}
x_{0}=Q_{C}x    \Longleftrightarrow \langle  x-x_{0}\,,\,J(x_{0}-y)\rangle    \geq 0,
\end{align}
 for each $y \in C$.
We have  $P_{C}= Q_{C} $ in a Hilbert space (see \cite{it}).

\section{main results}

  First, we  introduce the following   new definition:
\begin{de}
Let   $C$ be a nonempty  closed convex subset of a real normed linear space  $E$ and\\ $B : C \rightarrow E $ be a nonlinear map. $B$ is said to be relaxed $(u, v)$-cocoercive, if there exist two constants $u, v > 0$ such that
 \begin{equation*}
    \langle Bx - By, j(x - y)\rangle \geq (-u)\|Bx - By\|^{2}+v\|x - y\|^{2},
 \end{equation*}
   for  all $x,y\in C$ and $j(x - y)\in J(x-y)$.
\end{de}
\begin{ex}\label{ex2}
  Let $C$ be a nonempty closed convex subset of a real Hilbert space $ H $, it is well-known  that
 $B : C \rightarrow H $ is said to be relaxed $(u, v)$-cocoercive, if there exist two constants $u, v > 0$ such that
 \begin{equation*}
    \langle Bx - By, x - y\rangle \geq (-u)\|Bx - By\|^{2}+v\|x - y\|^{2},
 \end{equation*}
 for  all  $x, y \in C$.  By Example 2.4.2 in \cite{Ag}, in a Hilbert space $ H $, the normalized duality mapping is the
identity.     Then $J(x-y)=\lbrace x-y\rbrace  $. Therefore, the above definition  extends   the definition of  relaxed $(u, v)$-cocoercive mappings, from real Hilbert spaces   to real  normed linear  spaces.
  \end{ex}

 Let us to define $v$-strongly monotone mappings on real normed linear spaces, too.
 \begin{de}
Let   $C$ be a nonempty  closed convex subset of a real normed linear space  $E$ and\\ $B : C \rightarrow E $ be a nonlinear map. $B$  is called $v$-strongly monotone   if there exists     a constant $v > 0$ such that
 $$\langle Bx - By\;,\; j(x - y)\rangle\geq v\|x - y\|^{2},$$
   for  all $x,y\in C$ and $j(x - y)\in J(x-y)$.
\end{de}
\begin{ex}\label{ex3}
  Let $C$ be a nonempty closed convex subset of a real Hilbert space $ H $, it is well-known, too,  that
 $B : C \rightarrow H $ is said to be $v$-strongly monotone, if there exists  a constant $v > 0$ such that
 \begin{equation*}
    \langle Bx - By, x - y\rangle\geq v\|x - y\|^{2},
 \end{equation*}
 for  all  $x, y \in C$.
 Since $ H $ is  a Hilbert space,\\ $J(x-y)=\lbrace x-y\rbrace$. Therefore, the above definition  extends   the definition of  $v$-strongly monotone mappings, from real  Hilbert spaces to  real  normed linear spaces.
 \end{ex}

\begin{ex}\label{zendegi}
 Let $C$ be a nonempty closed convex subset of a real  Banach space $E$. Let $T$ be an $\alpha$-contraction of  $C$ into itself. Putting  $B=I-T$, we have
\begin{align*}
    \langle Bx - By,& j(x - y)\rangle\\ =& \langle (I-T)x - (I-T)y, j(x - y)\rangle\\=&
 \langle (x-y) - (Tx-Ty), j(x - y)\rangle\\=&   \langle x-y , j(x - y)\rangle- \langle Tx-Ty , j(x - y)\rangle\\\geq &
 \langle x-y , j(x - y)\rangle - \| Tx-Ty \| \|j(x - y)\|\\\geq &
 \| x-y \|^{2} - \| Tx-Ty \| \|x - y\|\\\geq &  \| x-y \|^{2} -\alpha \| x-y \|^{2}=(1-\alpha)\| x-y \|^{2}.
\end{align*}
Hence $B : C \rightarrow E $ is a $(1-\alpha)$-strongly monotone mapping,  therefore $B$ is a relaxed $(u, (1-\alpha))$-cocoercive mapping on $E$ for each  $u >0 $.
\end{ex}
 Now, we introduce   the following new definition that generalizes  the classical variational inequality problem  \ref{23}.
\begin{de}
 Let $ E $ be a real normed linear space.  Let $C$ be a nonempty  closed convex subset of   $E$. Let \\$B : C \rightarrow E $ be a nonlinear map.    The      classical variational inequality problem $V I (C, B)$  is to find $u \in C $ such that
 \begin{equation}\label{vi}
    \langle Bu\;,\; j( v - u) \rangle \geq 0,
 \end{equation}
 for all $v \in C$ and $j(v - u) \in J(v - u)$.
\end{de}
\begin{ex}\label{ex4}
 Let $C$ be a nonempty closed convex subset of a real Hilbert space $ H $     and
 $B : C \rightarrow H $   be a relaxed $(u, v)$-cocoercive mapping. Since $ H $ is a   Hilbert space,   $j(v - u)=\lbrace v-u\rbrace  $. Therefore,   \ref{vi}  generalizes    \ref{23} from   real  Hilbert spaces to real normed linear spaces.
 \end{ex}
 \begin{rem}\label{cheb}
Let $ C $ be a nonempty closed convex subset of a smooth, reflexive, and strictly convex Banach space $ E $.  Let  $Q_{C}  $  be  the
sunny nonexpansive retraction. By \eqref{qc}, we have
\begin{align}\label{aga}
u \in V I(C,B) \Longleftrightarrow u = Q_{C}(u -\lambda Bu).
\end{align}
\end{rem}
\begin{thm}\label{jmen} Let $E$ be a Banach space, for all $x,y \in E$, we have
   $$ \langle x-y\;,\; j(x-y)\rangle\leq \langle x-y\;,\;  x^{*}-y ^{*}\rangle+4\|x\|\|y\|,$$
   for all $x^{*}\in J(x),  y ^{*} \in J(y), j(x-y) \in J(x-y)$.
\end{thm}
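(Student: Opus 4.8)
The plan is to reduce the inequality to the defining identities of the normalized duality mapping, the estimate $|\langle z, f\rangle|\le \|z\|\,\|f\|$, and the triangle inequality; no finer geometric property of $E$ enters. First I would rewrite the left-hand side. Since $j(x-y)\in J(x-y)$, the definition of $J$ gives $\langle x-y, j(x-y)\rangle = \|x-y\|^{2}$, so the assertion to be proved becomes
\[
\|x-y\|^{2}\le \langle x-y\,,\, x^{*}-y^{*}\rangle + 4\|x\|\,\|y\|.
\]

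Next I would expand the pairing on the right by bilinearity and insert the duality identities $\langle x, x^{*}\rangle = \|x\|^{2}$ (from $x^{*}\in J(x)$) and $\langle y, y^{*}\rangle = \|y\|^{2}$ (from $y^{*}\in J(y)$), obtaining
\[
\langle x-y\,,\, x^{*}-y^{*}\rangle = \|x\|^{2}+\|y\|^{2}-\langle x, y^{*}\rangle-\langle y, x^{*}\rangle.
\]
I then bound the two cross terms from below. Because $y^{*}\in J(y)$ we have $\|y^{*}\|=\|y\|$, so $-\langle x, y^{*}\rangle \ge -\|x\|\,\|y^{*}\| = -\|x\|\,\|y\|$, and symmetrically $-\langle y, x^{*}\rangle \ge -\|y\|\,\|x\|$ using $\|x^{*}\|=\|x\|$. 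Substituting these estimates gives
\[
\langle x-y\,,\, x^{*}-y^{*}\rangle + 4\|x\|\,\|y\| \ge \|x\|^{2}+\|y\|^{2}+2\|x\|\,\|y\| = \bigl(\|x\|+\|y\|\bigr)^{2}.
\]

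Finally I would close the gap with the triangle inequality: $\|x-y\|\le \|x\|+\|y\|$ yields $\|x-y\|^{2}\le (\|x\|+\|y\|)^{2}$, which together with the previous display is exactly the claimed bound. There is essentially no hard step; the only point requiring care is to invoke the two \emph{distinct} duality identities correctly, namely $\|x^{*}\|=\|x\|$ and $\|y^{*}\|=\|y\|$, so that each cross term is controlled by $\|x\|\,\|y\|$. I would also remark that the constant $4$ is far from sharp — the argument consumes only $2\|x\|\,\|y\|$, so any constant $\ge 2$ suffices — but since the statement is phrased with $4$ this extra slack is harmless and keeps the estimate in the form needed later.
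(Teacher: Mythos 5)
Your main argument is correct, and it takes a genuinely different (and more self-contained) route than the paper. The paper never expands the pairing $\langle x-y\;,\;x^{*}-y^{*}\rangle$ at all; instead it quotes Theorem 4.2.4 of \cite{tn} as a black box, namely
\begin{align*}
\langle x-y\;,\;x^{*}-y^{*}\rangle \geq (\|x\|-\|y\|)^{2}+(\|x\|+\|y\|)(\|x\|+\|y\|-\|x+y\|),
\end{align*}
and then works the right-hand side down to $\|x-y\|^{2}-4\|x\|\|y\|$ via triangle-inequality manipulations (after first disposing of the case $x=y$ separately). You instead prove, directly from bilinearity, the identities $\langle x,x^{*}\rangle=\|x\|^{2}$ and $\|x^{*}\|=\|x\|$ (likewise for $y$), and the estimate $\vert\langle z,f\rangle\vert\leq\|z\|\|f\|$, the weaker bound $\langle x-y\;,\;x^{*}-y^{*}\rangle\geq(\|x\|-\|y\|)^{2}$, which is all that is actually needed; both proofs then finish by comparing $\|x-y\|^{2}\leq(\|x\|+\|y\|)^{2}$ against $(\|x\|-\|y\|)^{2}$, the difference of these two squares being exactly $4\|x\|\|y\|$. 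Your version removes the external citation and the case distinction at no cost, essentially by reproving the one consequence of the cited theorem that the argument uses.

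However, your closing remark --- that ``the argument consumes only $2\|x\|\|y\|$, so any constant $\geq 2$ suffices'' --- is wrong. Your argument spends $2\|x\|\|y\|$ twice: once to absorb the cross terms (giving $\langle x-y,x^{*}-y^{*}\rangle\geq\|x\|^{2}+\|y\|^{2}-2\|x\|\|y\|$) and once more in the triangle-inequality step (since $\|x-y\|^{2}\leq\|x\|^{2}+\|y\|^{2}+2\|x\|\|y\|$), so it requires the full $4\|x\|\|y\|$. Moreover, the constant $4$ is in fact sharp for general Banach spaces: in $E=\mathbb{R}^{2}$ with the maximum norm, take $x=(1,1)$, $y=(1,-1)$, $x^{*}=y^{*}=(1,0)$. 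Then $x^{*}\in J(x)$ and $y^{*}\in J(y)$ (each has dual norm $1$ and pairs with its vector to give $1$), while $\langle x-y\;,\;x^{*}-y^{*}\rangle=0$, $\langle x-y\;,\;j(x-y)\rangle=\|x-y\|^{2}=4$, and $\|x\|\|y\|=1$, so the inequality holds with equality and no constant smaller than $4$ can work. Since this sharpness claim is only an aside, it does not affect the validity of your proof of the theorem as stated, but the remark should be deleted or corrected.
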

\begin{proof}
   Let $x= y$, obviously the inequality holds. Let  $x^{*}\in J(x),  y ^{*} \in J(y) $ and $x\neq y$. As in the proof of Theorem 4.2.4 in \cite{tn}, we have
    \begin{align*}
        \langle x-y\;,\;  x^{*}-&y ^{*}\rangle \\ \geq & (\|x\|-\|y\|)^{2}\\&+ (\|x\|+\|y\|)(\|x\|+\|y\|-\|x+y\|).
    \end{align*}
    Hence, we have
    \begin{align*}
        \langle x-y\;,\;  x^{*}-&y ^{*}\rangle \\ \geq & (\|x\|-\|y\|)^{2}\\&+ (\|x\|+\|y\|)(\|x\|+\|y\|-\|x+y\|)\\=& (\|x\|-\|y\|)^{2}\\&+ (\|x\|+\|y\|)^{2}-\|x+y\| (\|x\|+\|y\|)\\ \geq &
        (\|x\|-\|y\|)^{2}\\&+ \|x-y\|^{2}- (\|x\|+\|y\|)^{2}
         \\ = & \|x-y\|^{2}-4\|x\|\|y\|\\=&
        \langle x-y\;,\; j(x-y)\rangle -4\|x\|\|y\|,
        \end{align*}
        therefore,
           $$ \langle x-y\;,\; j(x-y)\rangle\leq \langle x-y\;,\;  x^{*}-y ^{*}\rangle+4\|x\|\|y\|.$$
\end{proof}

Now we are ready to prove the main theorem:
\begin{thm}\label{thm1}
Let $ C $ be a nonempty closed convex subset of a smooth, reflexive, and strictly convex Banach space $ E $.   Suppose that $\mu>0 $,  and $v > u\mu^{2}+5\mu$. Let  $B: C \rightarrow E$
be a relaxed $(u, v)$-cocoercive,  $ \mu$-Lipschitzian mapping.  Let  $Q_{C}  $  be  the
sunny nonexpansive retraction  from $E$ onto $ C $.     Then $V I(C,B)$ is singleton.
\end{thm}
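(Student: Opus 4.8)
The plan is to recast $VI(C,B)$ as a fixed-point problem and apply the Banach contraction principle. By Remark~\ref{cheb}, for every fixed $\lambda>0$ the set $VI(C,B)$ equals the fixed-point set of the self-map $G$ of $C$ given by $Gx=Q_{C}(x-\lambda Bx)$; since $C$ is closed in the Banach space $E$ it is complete, so it suffices to produce one $\lambda>0$ making $G$ a strict contraction. Because $Q_{C}$ is nonexpansive, $\|Gx-Gy\|\le\|(x-\lambda Bx)-(y-\lambda By)\|$, and the whole argument reduces to estimating $\|(x-y)-\lambda(Bx-By)\|^{2}$ for $x,y\in C$.

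The key device for this estimate in the Banach setting is Theorem~\ref{jmen}, which I would apply with the substitution $P=x-y$ and $Q=\lambda(Bx-By)$, chosen precisely so that $P-Q=(x-y)-\lambda(Bx-By)$ is the vector to be controlled. As $E$ is smooth, $J$ is single-valued, and Theorem~\ref{jmen} gives
\begin{align*}
\|(x-y)-\lambda(Bx-By)\|^{2}=\langle P-Q,\,j(P-Q)\rangle\le \langle P-Q,\,j(P)-j(Q)\rangle+4\|P\|\,\|Q\|,
\end{align*}
where $j(P)\in J(x-y)$ and $j(Q)\in J(\lambda(Bx-By))$.

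I would then bound the right-hand side term by term, using $\|Bx-By\|\le\mu\|x-y\|$ throughout. Expanding $\langle P-Q,j(P)\rangle$ and invoking the relaxed $(u,v)$-cocoercivity of $B$ (legitimately applied to $j(P)\in J(x-y)$) yields $\langle P-Q,j(P)\rangle\le(1+\lambda u\mu^{2}-\lambda v)\|x-y\|^{2}$. For the second term, the defining identity $\langle\lambda(Bx-By),j(Q)\rangle=\lambda^{2}\|Bx-By\|^{2}$ together with $\langle x-y,j(Q)\rangle\le\|x-y\|\,\|j(Q)\|\le\lambda\mu\|x-y\|^{2}$ gives $-\langle P-Q,j(Q)\rangle\le(\lambda\mu+\lambda^{2}\mu^{2})\|x-y\|^{2}$, while $4\|P\|\,\|Q\|\le4\lambda\mu\|x-y\|^{2}$. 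Collecting everything,
\begin{align*}
\|Gx-Gy\|^{2}\le\bigl(1+\lambda u\mu^{2}-\lambda v+5\lambda\mu+\lambda^{2}\mu^{2}\bigr)\|x-y\|^{2}.
\end{align*}

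Finally, $G$ is a strict contraction precisely when $\lambda u\mu^{2}-\lambda v+5\lambda\mu+\lambda^{2}\mu^{2}<0$, that is $\lambda\mu^{2}<v-u\mu^{2}-5\mu$; the hypothesis $v>u\mu^{2}+5\mu$ is exactly what makes the interval $\bigl(0,\,(v-u\mu^{2}-5\mu)/\mu^{2}\bigr)$ of admissible $\lambda$ nonempty, so I would fix any such $\lambda$ and conclude by the Banach contraction principle that $G$ has a unique fixed point, which by Remark~\ref{cheb} is the unique point of $VI(C,B)$. I expect the main obstacle to be the middle estimate: without an inner product the cross terms present in the Hilbert-space computation no longer cancel, and the surplus $5\mu$ in the hypothesis is precisely the combined cost of the remainder $4\|P\|\,\|Q\|$ in Theorem~\ref{jmen} and the uncancelled term $\langle x-y,j(Q)\rangle$. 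A secondary point requiring care is that the cocoercivity and duality identities are applied to the single-valued images $j(P)$ and $j(Q)$ simultaneously, which is justified by the smoothness of $E$.
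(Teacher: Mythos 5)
Your proof is correct and follows essentially the same route as the paper: the same fixed-point reformulation via Remark \ref{cheb}, the same application of Theorem \ref{jmen} with $P=x-y$ and $Q=\lambda(Bx-By)$, the same term-by-term estimates (cocoercivity on $\langle Bx-By, j(x-y)\rangle$, Cauchy--Schwarz on the uncancelled cross term, the Lipschitz bound) producing the identical coefficient $1+\lambda u\mu^{2}-\lambda v+5\lambda\mu+\lambda^{2}\mu^{2}$, and the same conclusion by Banach's contraction principle. The only difference is a technicality: the paper additionally restricts $\lambda$ so that $\lambda\mu^{2}\bigl[\tfrac{v-u\mu^{2}-5\mu}{\mu^{2}}-\lambda\bigr]<1$, keeping the squared contraction coefficient nonnegative, whereas you allow any $\lambda$ in the interval $\bigl(0,(v-u\mu^{2}-5\mu)/\mu^{2}\bigr)$; you should likewise take $\lambda$ small enough that $1+\lambda u\mu^{2}-\lambda v+5\lambda\mu+\lambda^{2}\mu^{2}\geq 0$, so that its square root serves as a genuine contraction constant.
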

\begin{proof}
Let $\lambda$ be a real number such that
\begin{equation}
    0<\lambda<\frac{v-u\mu^{2}-5\mu}{\mu^{2}}, \quad \lambda   \mu^{2}[\frac{v-u\mu^{2}-5\mu}{\mu^{2}}- \lambda ]<1.
\end{equation}
    Then, by Theorem \ref{jmen},  for every $x, y \in C$, we have
    \begin{align*}
        \|Q_{C}&(I -\lambda B)x- Q_{C}(I -\lambda B)y\|^{2}\\ \leq &
        \|(I -\lambda B)x- (I -\lambda B)y\|^{2}\\ = &
        \|(x-y)-  \lambda (Bx- By)\|^{2}\\ = &
        \|j[(x-y)-  \lambda (Bx- By)]\|^{2}\\ = &
        \langle (x-y)-  \lambda (Bx- By),j[(x-y)\\&-  \lambda (Bx- By)]\rangle \\ \leq &
        \langle x-y -\lambda (Bx- By)  \;,\; j(x-y)\\&-\lambda j(Bx- By)\rangle\\&+4\lambda \|x-y \| \|Bx- By \|\\=&
        \langle x-y   \;,\; j(x-y)\rangle\\&-\lambda \langle  Bx- By   \;,\; j(x-y)\rangle \\&+\lambda \langle y-x  \;,\;   j(Bx- By)\rangle \\&+ \lambda ^{2} \langle  (Bx- By)  \;,\;  j(Bx- By)\rangle \\&+4\lambda \|x-y \| \|Bx- By \|\\\leq &
        \|x-y\|^{2}+\lambda u  \|Bx- By\|^{2}- \lambda v \|x-y\|^{2}\\&+ \lambda^{2} \|Bx- By\|^{2}+5\lambda \|x-y \| \|Bx- By \|\\ \leq &
        \|x-y\|^{2}+\lambda u  \mu^{2}\|x- y\|^{2}- \lambda v \|x-y\|^{2}\\&+ \lambda^{2}\mu^{2} \|x-y\|^{2}+5\lambda \mu \|x-y \| ^{2}\\ \leq &
        \Big(1+\lambda u  \mu^{2}- \lambda v+ \lambda^{2}\mu^{2} +5\lambda \mu \Big)\|x-y\| ^{2}\\\leq &
        \Big(1-\lambda   \mu^{2}[\frac{v-u\mu^{2}-5\mu}{\mu^{2}}- \lambda ] \Big)\|x-y\| ^{2}
    \end{align*}
    Now, since $1-\lambda   \mu^{2}[\frac{v-u\mu^{2}-5\mu}{\mu^{2}}- \lambda ] < 1$, the mapping \\$Q_{C}(I -\lambda B) : C \rightarrow C$ is a contraction
and Banach's Contraction Mapping Principle guarantees that it has a unique fixed
point  $  u$; i.e., $Q_{C}(I -\lambda B)u = u$, which is the unique solution of $V I(C,B)$ by  \ref{aga}.
\end{proof}
Since $v$-strongly monotone mappings are relaxed $(u, v)$-cocoercive, we conclude  the following theorem.
\begin{thm}\label{thm2}
Let $ C $ be a nonempty closed convex subset of a smooth, reflexive, and strictly convex Banach space $ E $.  Let  $Q_{C}  $  be  the
sunny nonexpansive retraction.  Suppose that $\mu>0 $,  and $v > u\mu^{2}+5\mu$. Let  $B: C \rightarrow E$
be a  $v$-strongly monotone,  $ \mu$-Lipschitzian mapping.     Then $V I(C,B)$ is singleton.
\end{thm}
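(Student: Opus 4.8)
The plan is to obtain this statement as an immediate corollary of Theorem \ref{thm1}, by recognizing that the $v$-strongly monotone hypothesis is simply a stronger form of the relaxed $(u,v)$-cocoercive hypothesis. Since $E$, $C$, $Q_C$, $\mu$, and $v$ already satisfy every structural and numerical assumption of Theorem \ref{thm1}, the only thing I would need to check is that a $v$-strongly monotone, $\mu$-Lipschitzian $B$ is also relaxed $(u,v)$-cocoercive for the very constant $u$ fixed by the standing assumption $v > u\mu^{2}+5\mu$.

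That verification is the sole piece of content, and it is a one-line comparison of the two defining inequalities. First I would fix the $u>0$ appearing in $v > u\mu^{2}+5\mu$ and take arbitrary $x,y \in C$ with $j(x-y)\in J(x-y)$. Strong monotonicity gives $\langle Bx-By, j(x-y)\rangle \geq v\|x-y\|^{2}$, while $(-u)\|Bx-By\|^{2}\leq 0$ because $u>0$; appending this nonpositive term to the right-hand side only weakens the estimate, so
\[
\langle Bx-By, j(x-y)\rangle \geq v\|x-y\|^{2} \geq (-u)\|Bx-By\|^{2}+v\|x-y\|^{2},
\]
which is exactly the relaxed $(u,v)$-cocoercive inequality. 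This is the same phenomenon already recorded in Example \ref{zendegi}, where a strongly monotone $B=I-T$ is declared relaxed $(u,\,1-\alpha)$-cocoercive for every $u>0$.

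With that in hand, $B$ meets all the hypotheses of Theorem \ref{thm1}: it is relaxed $(u,v)$-cocoercive and $\mu$-Lipschitzian, the constants satisfy $\mu>0$ and $v>u\mu^{2}+5\mu$, and $E$ is smooth, reflexive, and strictly convex with sunny nonexpansive retraction $Q_C$. Invoking Theorem \ref{thm1} then gives that $Q_C(I-\lambda B)$ is a contraction for a suitable $\lambda$, hence has a unique fixed point, which by \eqref{aga} is the unique point of $VI(C,B)$. I do not expect any genuine obstacle: the argument reduces to Theorem \ref{thm1} in a single step, and the only point deserving a moment's care is conceptual rather than computational, namely noting that strong monotonicity supplies the relaxed cocoercive inequality for precisely the $u$ already fixed in the hypothesis, so no re-selection of constants and no re-running of the contraction estimate is required.
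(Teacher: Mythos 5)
Your proposal is correct and follows exactly the paper's own route: the paper derives Theorem \ref{thm2} from Theorem \ref{thm1} with the single observation that a $v$-strongly monotone mapping is relaxed $(u,v)$-cocoercive, since the added term $(-u)\|Bx-By\|^{2}$ is nonpositive. Your explicit verification for the particular $u$ fixed by the hypothesis $v>u\mu^{2}+5\mu$ is precisely the content the paper leaves implicit, so nothing further is needed.
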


 We can conclude  Proposition 2 in \cite{sa} for $v > u\mu^{2}+5\mu$, as follows:
\begin{cor} Let $C$ be a nonempty closed convex subset of a Hilbert space  $H$ and let  $B : C \rightarrow H$
be a relaxed $(u, v)$-cocoercive and  $ 0<\mu$-Lipschitzian mapping such that $v > u\mu^{2}+5\mu$. Then
$V I(C,B)$ is singleton.

\end{cor}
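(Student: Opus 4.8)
The plan is to obtain this statement as an immediate specialization of Theorem \ref{thm1}, exploiting that a Hilbert space is an especially well-behaved instance of the Banach spaces treated there. First I would verify the three geometric hypotheses of Theorem \ref{thm1}. Every Hilbert space $H$ is reflexive, since it is isometrically its own dual; it is strictly convex, because the parallelogram law forces $\|\frac{x+y}{2}\| < 1$ whenever $\|x\| = \|y\| = 1$ and $x \neq y$; and it is smooth, because its normalized duality map is single-valued. Hence $H$ fulfils the ambient assumptions under which Theorem \ref{thm1} is stated.

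Next I would reconcile the data of the corollary with the data required by Theorem \ref{thm1}. By Example 2.4.2 in \cite{Ag} the normalized duality mapping on $H$ is the identity, so $J(x-y) = \{x-y\}$ for all $x,y$; consequently, as already recorded in Example \ref{ex2}, the Banach-space relaxed $(u,v)$-cocoercive condition of the corollary collapses to exactly the classical Hilbert-space inequality imposed on $B$, and by Example \ref{ex4} the generalized problem $VI(C,B)$ of \ref{vi} coincides with the classical problem \ref{23}. Moreover, as noted in the preliminaries, $P_C = Q_C$ in a Hilbert space, so the sunny nonexpansive retraction demanded by Theorem \ref{thm1} is simply the metric projection, which is defined on all of $H$ because $C$ is nonempty, closed and convex.

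With these identifications in place, the hypotheses $\mu > 0$, $v > u\mu^{2} + 5\mu$, together with $B$ being relaxed $(u,v)$-cocoercive and $\mu$-Lipschitzian, are precisely those of Theorem \ref{thm1} applied to the space $H$ with retraction $Q_C$; invoking that theorem yields at once that $VI(C,B)$ is a singleton. I do not anticipate any genuine obstacle, as the entire content is carried by Theorem \ref{thm1} and the only checks are the standard geometry of $H$ and the definitional coincidences isolated in Examples \ref{ex2} and \ref{ex4}. The single point worth flagging is conceptual rather than technical: the threshold $v > u\mu^{2} + 5\mu$ is inherited verbatim from the Banach-space estimate of Theorem \ref{jmen}, whose slack term $4\|x\|\|y\|$ produces the non-optimal constant $5\mu$; this term is unnecessary in the Hilbert setting, which is exactly why the corollary recovers Proposition 2 of \cite{sa} only under the stronger hypothesis $v > u\mu^{2} + 5\mu$.
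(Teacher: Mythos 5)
Your proposal is correct and follows exactly the route the paper intends: the corollary is stated there as an immediate specialization of Theorem \ref{thm1} to a Hilbert space, using precisely the identifications you cite (the duality map being the identity, so Examples \ref{ex2} and \ref{ex4} collapse the new definitions to the classical ones, and $P_C = Q_C$). Your explicit verification of reflexivity, strict convexity, and smoothness, and your closing remark on the non-optimality of the constant $5\mu$, only make explicit what the paper leaves implicit.
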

\begin{rem}
S.  Saeidi, in the proof of  Proposition 2  in  \cite{sa}          proves  that
\begin{align*}
 \|P_{C}&(I -  s A)x- P_{C}(I -s A)y\|^{2} \\& \leq  \Big(1-s   \mu^{2}[\frac{2(r-\gamma \mu^{2})}{\mu^{2}}- s ] \Big)\|x-y\| ^{2}
\end{align*}
when $ 0< s<\frac{2(r-\gamma \mu^{2})}{\mu^{2}}$ and $ r>\gamma \mu^{2}$. Putting \\$ r=\gamma=s=1 $ and $ \mu=\frac{1}{10} $ we have \\$\Big(1-s   \mu^{2}[\frac{2(r-\gamma \mu^{2})}{\mu^{2}}- s ] \Big)<0  $ that is a contradiction. We correct  this contradiction in the proof of theorem \ref{thm1}.

\end{rem}

We can conclude  Proposition 3 in \cite{sa} for $v > 5\mu$, as follows:
\begin{cor} Let $C$ be a nonempty closed convex subset of a Hilbert space  $H$ and let  $B : C \rightarrow H$
be a  $v$-strongly monotone and $ 0<\mu$-Lipschitzian mapping such that  $v > 5\mu$. Then
$V I(C,B)$ is singleton.

\end{cor}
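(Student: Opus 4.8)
The plan is to obtain this corollary as a specialization of Theorem \ref{thm2} to the Hilbert-space setting, the only real content being the choice of the auxiliary parameter $u$ that converts the bound $v > 5\mu$ into the threshold condition $v > u\mu^{2}+5\mu$ required there.

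First I would note that every real Hilbert space $H$ is smooth, reflexive, and strictly convex, so the standing hypotheses on the ambient space in Theorem \ref{thm2} hold automatically. As recorded in Example \ref{ex4} and Remark \ref{cheb}, in a Hilbert space the normalized duality map is the identity, whence $J(v - u) = \{v - u\}$; consequently the generalized variational inequality \eqref{vi} collapses to the classical problem \eqref{23}, and the sunny nonexpansive retraction $Q_{C}$ coincides with the metric projection $P_{C}$. Thus the assertion that $V I(C,B)$ is singleton carries its usual meaning in $H$, and the problem is precisely the one to which Theorem \ref{thm2} applies.

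The single substantive step is to reconcile the two bounds. Given the hypothesis $v > 5\mu$ together with $\mu > 0$, I would fix any constant $u$ with $0 < u < \frac{v - 5\mu}{\mu^{2}}$; such a $u$ exists precisely because $v - 5\mu > 0$ and $\mu > 0$. For this $u$ one has $u\mu^{2} < v - 5\mu$, that is, $v > u\mu^{2} + 5\mu$. Since a $v$-strongly monotone mapping satisfies, for every $u > 0$,
\[
\langle Bx - By, x - y\rangle \geq v\|x - y\|^{2} \geq (-u)\|Bx - By\|^{2} + v\|x - y\|^{2},
\]
the map $B$ is in particular relaxed $(u,v)$-cocoercive for the chosen $u$, and it remains $\mu$-Lipschitzian. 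Theorem \ref{thm2} (equivalently Theorem \ref{thm1}) then applies and yields that $V I(C,B)$ is a singleton.

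I do not expect a genuine obstacle here: the argument is a direct specialization plus a one-line parameter selection. The only point to guard against is the temptation to read $u$ as an intrinsic constant of the strongly monotone mapping; instead it is an auxiliary quantity chosen a posteriori, small enough that the cocoercivity threshold $v > u\mu^{2} + 5\mu$ is met, which is always possible once $v > 5\mu$.
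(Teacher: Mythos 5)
Your proposal is correct and follows essentially the same route the paper intends: specialize Theorem \ref{thm2} (via Theorem \ref{thm1}) to the Hilbert setting, where smoothness, reflexivity, and strict convexity hold automatically and $Q_{C}=P_{C}$, noting that a $v$-strongly monotone map is relaxed $(u,v)$-cocoercive for every $u>0$. Your explicit choice of $u$ with $0<u<\frac{v-5\mu}{\mu^{2}}$ to turn $v>5\mu$ into $v>u\mu^{2}+5\mu$ is exactly the (unwritten) parameter selection the paper's deduction relies on, and you correctly identify $u$ as an auxiliary constant rather than an intrinsic one.
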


\end{large}

%-----------------------------------------------------------------------------
%-----------------------------------------------------------------------------

\bigskip
\bigskip

%{\bf Received: Month xx, 200x}

\end{document}